\newtheorem{theorem}{Theorem}[section]
\newtheorem{lemma}[theorem]{Lemma}
\newtheorem{problem}[theorem]{Problem}
\theoremstyle{definition}
\theoremstyle{remark}
\numberwithin{equation}{section}
\newcommand{\R}{\mathbb{R}}
\newcommand{\N}{\mathbb{N}}
\newcommand{\X}{\mathrm{X}}
\newcommand{\Y}{\mathrm{Y}}
\newcommand{\Z}{\mathrm{Z}}
\newcommand{\B}{\mathbf{B}}
\newcommand{\I}{\mathbf{I}}
\renewcommand{\S}{\mathbf{S}}
\begin{document}

\title[Operators on $C_{0}(L,\X)$ whose range does not contain $c_{0}$]{Operators on $C_{0}(L,\X)$ whose range\\ does not contain $c_{0}$}

\author{Jarno Talponen}
\address{University of Helsinki, Department of Mathematics and Statistics, Box 68, \\ 
(Gustaf H\"{a}llstr\"{o}minkatu 2b) FI-00014 University of Helsinki, Finland}
\email{talponen@cc.helsinki.fi}

\subjclass{Primary 46B20; Secondary 46B28}
\date{\today}

\begin{abstract}
This paper contains the following results:\\ 
(a) Suppose that $\X\neq \{0\}$ is a Banach space and $(L,\tau)$ is a non-empty locally compact Hausdorff space without isolated points.  
Then each linear operator $T\colon C_{0}(L,\X)\rightarrow C_{0}(L,\X)$ whose range does not contain an isomorphic copy
of $c_{00}$ satisfies the Daugavet equality $||\I+T||=1+||T||$.\\
(b) Let $\Gamma$ be a non-empty set and $\X,\Y$ be Banach spaces such that $\X$ is reflexive and $\Y$ does not contain 
$c_{0}$ isomorphically. Then any continuous linear operator $T\colon c_{0}(\Gamma,\X)\rightarrow \Y$ is weakly compact.
\end{abstract}
\maketitle

\section{Introduction}
In what follows $\X,\Y$ and $\Z$ are real Banach spaces different from $\{0\}$, 
$L$ is a non-empty locally compact Hausdorff space and $C_{0}(L,\X)$ denotes the $||\cdot||_{\infty}$-normed Banach space of 
$\X$-valued continuous functions on $L$ vanishing at infinity. This work is related to the result due to P. Cembranos \cite{Cembranos1} 
that if $K$ is an infinite compact and $\X$ is an infinite-dimensional Banach space, 
then the Banach space $C(K,\X)$ of $\X$-valued continuous functions on $K$ contains a complemented copy of $c_{0}$. 
See also the related paper \cite{Cembranos2} where the Dieudonn\'{e} property of $C(K,\X)$ is studied.
  
Taking another direction, we will study continuous linear operators of the type\\ 
($\ast$)\ $T\colon C_{0}(L,\X)\rightarrow \Y$, where $\Y$ does not contain $c_{0}$ isomorphically.\\ 
Above there is a structural disparity between spaces $C_{0}(L,\X)$ and $\Y$, 
since typically the former space contains copies of $c_{0}$ in abundance. This difference has a strong impact on the properties of $T$. 
Namely, it turns out that the range of $T$ in $\Y$ is \emph{small} in some sense. 

If $L$ does not contain isolated points, then an operator $T\colon C_{0}(L,\X)\rightarrow C_{0}(L,\X)$ of type ($\ast$) satisfies the 
Daugavet type equality $||\I+T||=1+||T||$ (see Theorem \ref{thm1}). See \cite{Werner} for recent discussion on matters related to the 
Daugavet property.

If $L$ is discrete, $\X$ is reflexive and $c_{0}$ is not contained in $\Y$ isomorphically, then an operator 
$T\colon C_{0}(L,\X)\rightarrow \Y$ is weakly compact (see Theorem \ref{thm2}).  

\subsection*{Preliminaries}
Here $\X$ and $\Y$ denote real Banach spaces. The closed unit ball and the unit sphere of $\X$ are denoted by $\B_{\X}$ and $\S_{\X}$,
respectively. An identity mapping is denoted by $\I$. An operator $T\colon\X\rightarrow\Y$ is weakly compact if 
$\overline{T(\B_{\X})}$ is weakly compact. If $\X\neq \{0\}$, then we say that $\X$ is \emph{non-trivial}.
For given sets $A\subset B$ the mapping $\chi_{A}\colon B\rightarrow \{0,1\}$ is determined by $\chi_{A}(t)=1$ if and only if $t\in A$.
We refer to \cite{Diestel}, \cite{HHZ} and \cite{Willard} for suitable background information including definitions and basic results.

\section{Results}

\begin{theorem}\label{thm1}
Let $\X$ be a non-trivial Banach space and $(L,\tau)$ a non-empty locally compact Hausdorff space without isolated points.  
Then each linear operator $T\colon C_{0}(L,\X)\rightarrow C_{0}(L,\X)$ whose range does not contain an isomorphic copy
of $c_{00}$ satisfies the Daugavet equality 
\[||\I+T||=1+||T||.\] 
\end{theorem}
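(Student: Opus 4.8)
The inequality $\|\I+T\|\le 1+\|T\|$ is immediate from the triangle inequality, so the whole content is the reverse estimate $\|\I+T\|\ge 1+\|T\|$. The plan is to produce, for each $\varepsilon>0$, a norm-one function $f$ and a point $s\in L$ at which $f(s)+(Tf)(s)$ has $\X$-norm at least $1+\|T\|-O(\varepsilon)$. To start, fix $g\in\S_{C_{0}(L,\X)}$ with $\|Tg\|_{\infty}>\|T\|-\varepsilon$, write $h=Tg\in C_{0}(L,\X)$, and choose $t_{0}\in L$ with $\|h(t_{0})\|_{\X}>\|T\|-\varepsilon$. Pick $x^{*}\in\S_{\X^{*}}$ with $x^{*}(h(t_{0}))=\|h(t_{0})\|_{\X}$ and a unit vector $e\in\S_{\X}$ with $x^{*}(e)>1-\varepsilon$. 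Using continuity, shrink to an open neighbourhood $W$ of $t_{0}$ on which both $\|h(\cdot)-h(t_{0})\|_{\X}<\varepsilon$ and $\|g(\cdot)-g(t_{0})\|_{\X}<\varepsilon$.

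The idea is to modify $g$ only on a small piece of $W$ so as to force the value $e$ at one point while barely disturbing $Tg$. Since $(L,\tau)$ is Hausdorff without isolated points, $W$ contains infinitely many pairwise disjoint non-empty open sets $U_{1},U_{2},\dots$; by local compactness and Urysohn's lemma pick $s_{n}\in U_{n}$ and a scalar bump $\phi_{n}$ with $0\le\phi_{n}\le 1$, $\phi_{n}(s_{n})=1$ and compact support inside $U_{n}$. Set $u_{n}=\phi_{n}\,(e-g(s_{n}))\in C_{0}(L,\X)$, so that $\operatorname{supp}u_{n}\subset U_{n}$ and $\|u_{n}\|_{\infty}\le 2$. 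Because the supports are pairwise disjoint, every finite signed sum satisfies $\|\sum_{n\in F}\pm u_{n}\|_{\infty}\le 2$; hence $\sum_n u_{n}$ is weakly unconditionally Cauchy, and therefore so is its image $\sum_n Tu_{n}$, with $\|\sum_{n\in F}\pm Tu_{n}\|\le 2\|T\|$. This is the crucial point: by the Bessaga--Pe\l czy\'nski $c_{0}$-dichotomy (see \cite{Diestel}) a weakly unconditionally Cauchy series with $\inf_{n}\|Tu_{n}\|>0$ would contain a subsequence equivalent to the unit vector basis of $c_{0}$, and the finite span of such a subsequence is a copy of $c_{00}$ lying in the range of $T$, contradicting the hypothesis. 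Consequently $\liminf_{n}\|Tu_{n}\|=0$, so we may fix an index $n_{0}$ with $\|Tu_{n_{0}}\|<\varepsilon$.

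Put $f=g+u_{n_{0}}$ and $s=s_{n_{0}}$. A direct pointwise estimate, using $\|g(t)-g(s)\|_{\X}<2\varepsilon$ for $t\in U_{n_{0}}\subset W$ and the convexity bound $\|(1-\phi)g+\phi e\|\le 1$, gives $\|f\|_{\infty}\le 1+2\varepsilon$, while $f(s)=g(s)+(e-g(s))=e$ and $Tf=h+Tu_{n_{0}}$ with $\|Tf-h\|<\varepsilon$. Evaluating the functional $x^{*}$ at $s$,
\[
x^{*}\big(f(s)+(Tf)(s)\big)=x^{*}(e)+x^{*}(h(s))+x^{*}\big((Tu_{n_{0}})(s)\big)>(1-\varepsilon)+(\|T\|-2\varepsilon)-\varepsilon,
\]
since $s\in W$ forces $x^{*}(h(s))\ge\|h(t_{0})\|_{\X}-\varepsilon>\|T\|-2\varepsilon$. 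Thus $\|(\I+T)f\|_{\infty}\ge 1+\|T\|-4\varepsilon$, and dividing by $\|f\|_{\infty}\le 1+2\varepsilon$ and letting $\varepsilon\downarrow 0$ yields $\|\I+T\|\ge 1+\|T\|$. I expect the main obstacle to be exactly the step that controls $T$ on the perturbation: one cannot prescribe $u_{n_{0}}$ in advance, and it is only the interplay between the disjointness of the supports (forcing the weakly unconditionally Cauchy bound) and the $c_{00}$-avoidance hypothesis (forcing $Tu_{n}\to 0$ through Bessaga--Pe\l czy\'nski) that delivers a single good perturbation; the topological hypotheses enter precisely to manufacture the infinitely many disjoint bumps needed to run this dichotomy.
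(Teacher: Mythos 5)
Your proposal is correct and takes essentially the same route as the paper: both arguments fix an almost-norming $g$, locate an open region where $\|Tg\|$ is nearly maximal, use the Hausdorff/no-isolated-points hypotheses to manufacture infinitely many disjointly supported bumps inside it, invoke the Bessaga--Pe\l czy\'nski $c_{0}$-criterion (which the paper packages as a lemma on $\mathrm{osc}_{T}$ over disjoint open sets) to find one bump whose $T$-image is negligible, and then perturb $g$ on that bump to force alignment of $f$ and $Tf$ at a point. The only differences are in execution, not in the key ideas: the paper interpolates toward the normalized value of $Tg$ itself, keeping the perturbed function exactly in the unit ball and estimating norms directly, whereas you prescribe an auxiliary almost-normed vector $e$ and evaluate with a norming functional $x^{*}$, paying an extra factor $1+2\varepsilon$ in $\|f\|_{\infty}$.
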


Let us first make some preparations before giving the proof. It is easy to see that the range of $T$ contains $c_{00}$ isomorphically
if and only if the closure of the range contains $c_{0}$.

The assumption that $L$ does not contain isolated points cannot be removed. 
Indeed, if $L$ is not a singleton, $t_{0}\in L$ is an isolated point and $\X$ contains no isomorphic copy of $c_{0}$, 
then the linear operator $T\colon C_{0}(L,\X)\rightarrow \X;\ F \mapsto -\chi_{\{t_{0}\}}(\cdot)F(\cdot)$ 
is of type ($\ast$) and satisfies $||T||=||\I+T||=1$.

The above Theorem \ref{thm1} holds analogously for $T\colon CB(L,\X)\rightarrow CB(L,\X)$, essentially with the same proof.
Here $CB(L,\X)$ is the $||\cdot||_{\infty}$-normed Banach space of $\X$-valued bounded continuous functions on $L$.

For a linear operator $T\colon C_{0}(L,\X)\rightarrow \Y$ we denote
\[\mathrm{osc}_{T}(A)=\sup \{||TF||:\ F\in \B_{C_{0}(L,\X)},\ L\setminus U\subset F^{-1}(0)\}\quad \mathrm{for}\ A\subset L.\]

\begin{lemma}\label{lemma}
Let $T\colon C_{0}(L,\X)\rightarrow \Y$ be a linear operator, where $\Y$ does not contain $c_{0}$ isomorphically.
Suppose that $(V_{n})_{n\in\N}$ is a sequence of pair-wise disjoint non-empty open subsets of $L$.
Then $\mathrm{osc}_{T}(V_{n})\rightarrow 0$ as $n\rightarrow 0$.
\end{lemma}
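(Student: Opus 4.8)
The plan is to argue by contradiction, converting the hypothesis into a weakly unconditionally Cauchy (WUC) series in $\Y$ and then invoking the classical Bessaga--Pe\l czy\'{n}ski characterization of spaces not containing $c_{0}$. So suppose the conclusion fails. Then, after passing to a subsequence, there is some $\varepsilon>0$ with $\mathrm{osc}_{T}(V_{n})>\varepsilon$ for every $n\in\N$. By the definition of $\mathrm{osc}_{T}$, I can select for each $n$ a function $F_{n}\in\B_{C_{0}(L,\X)}$ that vanishes on $L\setminus V_{n}$ and satisfies $\|TF_{n}\|>\varepsilon$.

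The structural point is that the sets $V_{n}$ are pairwise disjoint, so the $F_{n}$ have pairwise disjoint supports. Hence for any finite $G\subset\N$ and any choice of signs $\theta_{n}\in\{-1,+1\}$, the finite sum $\sum_{n\in G}\theta_{n}F_{n}$ again belongs to $C_{0}(L,\X)$, and since at each point of $L$ at most one summand is non-zero we get $\|\sum_{n\in G}\theta_{n}F_{n}\|_{\infty}=\max_{n\in G}\|F_{n}\|_{\infty}\le 1$. Applying $T$ and using its boundedness yields $\|\sum_{n\in G}\theta_{n}TF_{n}\|=\|T(\sum_{n\in G}\theta_{n}F_{n})\|\le\|T\|$. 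As this bound is independent of $G$ and of the signs $\theta_{n}$, the series $\sum_{n}TF_{n}$ is weakly unconditionally Cauchy in $\Y$.

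Now I invoke the Bessaga--Pe\l czy\'{n}ski theorem (see \cite{Diestel}): since $\Y$ contains no isomorphic copy of $c_{0}$, every WUC series in $\Y$ is unconditionally convergent, hence in particular norm convergent. The terms of a convergent series must tend to zero, so $\|TF_{n}\|\to 0$, which contradicts $\|TF_{n}\|>\varepsilon$ for all $n$. This contradiction establishes $\mathrm{osc}_{T}(V_{n})\to 0$. The crux of the argument — and where the special geometry of $C_{0}(L,\X)$ is used — is the disjoint-support estimate that produces the uniform bound on the signed partial sums; this is precisely the embedding of the $c_{0}$-like structure of disjointly supported functions into the hypothesis. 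Once that estimate is in hand, the only real input is the Bessaga--Pe\l czy\'{n}ski dichotomy, so I expect no further obstacle; the remaining verifications (membership of finite sums in $C_{0}(L,\X)$ and the norm computation) are routine.
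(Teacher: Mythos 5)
Your proof is correct and follows essentially the same route as the paper's: argue by contradiction after passing to a subsequence, pick disjointly supported functions in $\B_{C_{0}(L,\X)}$ with $\|TF_{n}\|$ bounded below, use disjointness of supports to bound all signed finite sums of the $TF_{n}$ by $\|T\|$, and invoke Bessaga--Pe\l czy\'{n}ski. The only (cosmetic) difference is the formulation of that theorem you cite: you use the WUC form (in a space without $c_{0}$ every WUC series converges, so its terms tend to zero), whereas the paper uses the basic-sequence form (a seminormalized sequence with uniformly bounded signed sums is equivalent to the $c_{0}$ basis) to exhibit a copy of $c_{0}$ in the closure of the range; these are equivalent statements of the same dichotomy.
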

\begin{proof} 
By passing to a subsequence it suffices, without loss of generality, to show that $\inf_{n\in\N}\mathrm{osc}_{T}(V_{n})=0$. 
Indeed, assume to the contrary that there is some $d>0$ such that $\mathrm{osc}_{T}(V_{n})\geq d$ for all $n\in\N$. 
This means that one can find a sequence 
$(F_{n})_{n\in\N}\subset (\frac{1}{d}+1)\B_{C_{0}(L,\X)}$ such that $F_{n}$ is supported in $V_{n}$ and
$||T(F_{n})||=1$ for $n\in\N$. Note that for each finite subset $I\subset \N$ it holds that 
$\sum_{i\in I}F_{i}\in (\frac{1}{d}+1)\B_{C_{0}(L,\X)}$ as $V_{n}$ are pair-wise disjoint. 
Since $T$ is linear and continuous, we obtain that 
\[\sup_{\epsilon,I}\left|\left|\sum_{i\in I} T(\epsilon_{i}F_{i})\right|\right|\leq \left(\frac{1}{d}+1\right)||T||,\]
where the supremum is taken over all signs $\epsilon\colon \N \rightarrow \{-1,1\}$ and finite subsets $I\subset\N$. 

Recall the well-known result due to Bessaga and Pelczynski (see e.g. \cite[p.202]{HHZ}) that in a Banach space $\Y$ 
a sequence $(y_{n})\subset S_{\Y}$ is equivalent to the standard unit vector basis of $c_{0}$ if and only if 
$\sup_{\epsilon,I} ||\sum_{i\in I} \epsilon_{i}y_{i}||<\infty$ (supremum taken as above). 
By placing $y_{i}=T(F_{i})$ we obtain that the range of $T$ contains $c_{00}$ isomorphically, which 
contradicts the assumptions. Hence $\inf_{n\in\N}\mathrm{osc}_{T}(V_{n})=0$.
\end{proof}

\begin{proof}[Proof of Theorem \ref{thm1}]

Recall that as $(L,\tau)$ is a locally compact Hausdorff space it is completely regular, that is, for each closed set 
$C\subset L$ and $t\in L\setminus C$ there is a continuous map $s\colon L\rightarrow \R$ such that
$s(C)=\{0\}$ and $s(t)=1$.
 
Suppose that there are no isolated points in 
$(L,\tau)$. Let $T\colon C_{0}(L,\X)\rightarrow C_{0}(L,\X)$ be a linear operator.
If the operator norm of $T$ is $0$ or $\infty$, then the Daugavet equation holds trivially, 
so that we may concentrate on the case $||T||=C\in (0,\infty)$. Let $k\in \N$. 
Fix $F\in \S_{C_{0}(L,\X)}$ such that $G=TF$ satisfies $||G||> C-\frac{1}{k}$. Consider the 
open subspace $U=\{t\in L:\ ||G(t)||> C-\frac{1}{k}\}$ of $L$. 

We can pick a sequence $(V_{n})_{n\in\N}\subset U$ of pair-wise disjoint open subsets as follows. 
Clearly $\overline{U}$ is also a locally compact (even compact) Hausdorff space which does not contain isolated points. 
Hence $U$ itself is not a singleton, and we may take two points $t_{0},t_{1}\in U ,\ t_{0}\neq t_{1}$. 
Since $U$ is a Hausdorff space, there are disjoint open neighbourhoods $U_{0},U_{1}\subset U$ of $t_{0}$ and $t_{1}$, respectively. 
By repeating the same reasoning, pick $t_{10},t_{11}\in U_{1},\ t_{10}\neq t_{11}$ and disjoint open neigbourhoods 
$U_{10},U_{11}\subset U_{1}$ of $t_{10}$ and $t_{11}$, respectively. Similarly, pick $t_{110},t_{111}\in U_{11},\ t_{110}\neq t_{111}$ 
and the corresponding disjoint open neighbourhoods $U_{110},U_{111}\subset U_{11}$. Proceeding in this manner yields 
a sequence of pair-wise disjoint open subsets by letting $V_{n}=U_{s},\ s\in \{1\}^{n}\times \{0\}$ for $n\in\N$.

Since $c_{0}\not\subset \overline{T(C_{0}(L,\X))}$ we obtain by using Lemma \ref{lemma} that 
$\mathrm{osc}_{T}(V_{n})\rightarrow 0$ as $n\rightarrow\infty$. Fix $n\in\N$ such that $\mathrm{osc}_{T}(V_{n})<\frac{1}{k}$. 
Let $u_{0}\in V_{n}$. By using the complete regularity of $L$ 
one can find a continuous map $s\colon L\rightarrow [0,1]$ such that $s(L\setminus V_{n})=\{0\}$ and $s(u_{0})=1$. 
Observe that the mappings $s(\cdot)F(\cdot)$ and $\frac{s(\cdot)}{\max(1,||G(\cdot)||_{\X})}G(\cdot)$ are elements of $\B_{C_{0}(L,\X)}$.
Hence 
\begin{equation}\label{Pk}
||T(s(\cdot)F(\cdot))||_{C_{0}(L,\X)}\leq \frac{1}{k}\ \mathrm{and}\ \left|\left|T\left(\frac{s(\cdot)}{\max(1,||G(\cdot)||_{\X})}G(\cdot)\right)\right|\right|_{C_{0}(L,\X)}\leq \frac{1}{k}
\end{equation}
by the definition of $\mathrm{osc}_{T}(V_{n})$. Note that 
\[\left|\left|(1-s(t))F(t)-\frac{s(t)}{\max(1,||G(t)||_{\X})}G(t)\right|\right|_{\X}\leq (1-s(t))||F(t)||_{\X}+s(t)\leq 1\]
for all $t\in L$. Hence $E(\cdot)\stackrel{\cdot}{=}(1-s(\cdot))F(\cdot)+\frac{s(\cdot)}{\max(1,||G(\cdot)||_{\X})}G(\cdot)$ 
defines an element of $\B_{C_{0}(L,\X)}$. Note that $E$ is a kind of interpolation of $F$ and $G$.

Observe that 
\[||G-T(E)||\leq \frac{2}{k}\]
according to \eqref{Pk}, and that
\begin{equation*}
\begin{array}{rcl} 
||E+G||_{C_{0}(L,\X)}&\geq &||(E+G)u_{0}||_{\X}=\left|\left|\frac{s(u_{0})}{||G(u_{0})||_{\X}}G(u_{0})+G(u_{0})\right|\right|_{\X}\\
&=&1+||G(u_{0})||_{\X}>1+C-\frac{1}{k}.
\end{array}
\end{equation*}
Thus
\[||\I+T||\geq ||E+T(E)||\geq ||E+G||-||G-T(E)||>1+C-\frac{3}{k}\]
and by letting $k\rightarrow\infty$ we obtain that $||\I+T||\geq 1+C=1+||T||$. By the triangle inequality $||\I+T||\leq 1+||T||$ 
and we have the claim.
\end{proof}

Let us recall a few classical results due to James which are applied here frequently: 
A closed convex subset $C\subset\X$ is weakly compact if and only if each $f\in\X^{\ast}$ attains its supremum over $C$, and
$\X$ is reflexive if and only if $\B_{\X}$ is weakly compact (see e.g. \cite[Ch.3]{HHZ}).

\begin{theorem}\label{thm2}
Let $\Gamma$ be a non-empty set and $\X,\Y$ be Banach spaces such that $\X$ is reflexive and $\Y$ does not contain 
$c_{0}$ isomorphically. Then any continuous linear operator $T\colon c_{0}(\Gamma,\X)\rightarrow \Y$ is weakly compact.
\end{theorem}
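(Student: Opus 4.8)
The plan is to identify $c_{0}(\Gamma,\X)$ with $C_{0}(\Gamma_{d},\X)$, where $\Gamma_{d}$ is $\Gamma$ with the discrete topology: this is a locally compact Hausdorff space, every function on it is continuous, and vanishing at infinity means precisely that $\{\gamma:\ \|f(\gamma)\|>\epsilon\}$ is finite for each $\epsilon>0$. Hence both Lemma \ref{lemma} and the Bessaga--Pelczynski criterion recalled in its proof are available here. Since $T(\B_{c_{0}(\Gamma,\X)})$ is convex and bounded, its closure $C:=\overline{T(\B_{c_{0}(\Gamma,\X)})}$ is a closed, bounded, convex set, so by the theorem of James recalled above it suffices to show that every $y^{\ast}\in\Y^{\ast}$ attains its supremum over $C$.

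Fix $y^{\ast}\in\Y^{\ast}$ and put $\phi=T^{\ast}y^{\ast}$. Under the standard isometric identification $(c_{0}(\Gamma,\X))^{\ast}=\ell^{1}(\Gamma,\X^{\ast})$ we write $\phi=(\phi_{\gamma})_{\gamma}$ with $\phi_{\gamma}\in\X^{\ast}$, $\sum_{\gamma}\|\phi_{\gamma}\|=\|\phi\|<\infty$ and $\phi(f)=\sum_{\gamma}\phi_{\gamma}(f(\gamma))$; in particular the support $\{\gamma:\ \phi_{\gamma}\neq 0\}$ is countable and $\sup_{C}y^{\ast}=\sup_{f\in\B_{c_{0}(\Gamma,\X)}}\phi(f)=\sum_{\gamma}\|\phi_{\gamma}\|$. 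Denoting by $e_{\gamma}\otimes x$ the function with value $x$ at $\gamma$ and $0$ elsewhere and by $T_{\gamma}\colon\X\rightarrow\Y$, $T_{\gamma}x=T(e_{\gamma}\otimes x)$, the coordinate operators, we have $\phi_{\gamma}=T_{\gamma}^{\ast}y^{\ast}$ and $\|T_{\gamma}\|\le\|T\|$. Because $\X$ is reflexive, $\B_{\X}$ is weakly compact, so by James' theorem each $\phi_{\gamma}$ attains its norm; choose $x_{\gamma}\in\B_{\X}$ with $\phi_{\gamma}(x_{\gamma})=\|\phi_{\gamma}\|$, taking $x_{\gamma}=0$ when $\phi_{\gamma}=0$ so that $(T_{\gamma}x_{\gamma})_{\gamma}$ is supported on a countable set.

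It remains to realize $\sum_{\gamma}\|\phi_{\gamma}\|$ as $y^{\ast}(z_{0})$ for some $z_{0}\in C$. For every finite $I\subset\Gamma$ and signs $\epsilon_{\gamma}\in\{-1,1\}$ the element $\sum_{\gamma\in I}\epsilon_{\gamma}e_{\gamma}\otimes x_{\gamma}$ lies in $\B_{c_{0}(\Gamma,\X)}$, whence $\|\sum_{\gamma\in I}\epsilon_{\gamma}T_{\gamma}x_{\gamma}\|\le\|T\|$. Thus $\sum_{\gamma}T_{\gamma}x_{\gamma}$ is a weakly unconditionally Cauchy series, and since $\Y$ contains no isomorphic copy of $c_{0}$, the Bessaga--Pelczynski theorem forces it to converge unconditionally to some $z_{0}\in\Y$. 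Enumerating the countable support as $\gamma_{1},\gamma_{2},\dots$, the partial sums $T(\sum_{k\le N}e_{\gamma_{k}}\otimes x_{\gamma_{k}})$ belong to $T(\B_{c_{0}(\Gamma,\X)})$ and converge in norm to $z_{0}$, so $z_{0}\in C$; by continuity of $y^{\ast}$ we get $y^{\ast}(z_{0})=\sum_{\gamma}\phi_{\gamma}(x_{\gamma})=\sum_{\gamma}\|\phi_{\gamma}\|=\sup_{C}y^{\ast}$. Hence every $y^{\ast}\in\Y^{\ast}$ attains its supremum on $C$, and James' theorem gives that $C$ is weakly compact, i.e.\ $T$ is weakly compact.

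The main obstacle, and the precise place where the two hypotheses enter, is this attainment step. The supremum $\sum_{\gamma}\|\phi_{\gamma}\|$ is in general \emph{not} attained at any $f\in\B_{c_{0}(\Gamma,\X)}$ once $\phi$ has infinite support, because any maximizer would have to satisfy $\phi_{\gamma}(f(\gamma))=\|\phi_{\gamma}\|$, forcing $\|f(\gamma)\|=1$ on infinitely many coordinates and hence $f\notin c_{0}(\Gamma,\X)$. The resolution is to look for the maximizer in the closure $C$ rather than in $T(\B_{c_{0}(\Gamma,\X)})$ itself: reflexivity of $\X$ supplies the coordinatewise norming elements $x_{\gamma}$, while the absence of $c_{0}$ in $\Y$ is exactly what upgrades the weakly unconditionally Cauchy series $\sum_{\gamma}T_{\gamma}x_{\gamma}$ to a genuinely convergent one, producing the required point $z_{0}\in C$.
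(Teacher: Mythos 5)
Your proof is correct, and it takes a genuinely different route from the paper's, even though both share the same skeleton: reduce via James' theorem to showing that every $y^{\ast}\in\Y^{\ast}$ attains its supremum on $C=\overline{T(\B_{c_{0}(\Gamma,\X)})}$, use reflexivity of $\X$ to produce coordinatewise norming vectors, and exhibit the maximizer as the limit of images of finitely supported sums. The difference lies in how that limit is shown to exist. The paper runs everything through the oscillation functional: using Lemma \ref{lemma} it extracts a countable set $\{\gamma_{n}\}$ with $\mathrm{osc}_{T}(\Gamma\setminus\{\gamma_{n}\}_{n\leq k})\rightarrow 0$, equivalently $\|T-T\circ P_{k}\|\rightarrow 0$, and only ever identifies the duals of the finite sections $\Z_{k}$; Cauchyness of the partial sums is then immediate from this operator-norm tail estimate. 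You instead invoke the global isometry $(c_{0}(\Gamma,\X))^{\ast}=\ell^{1}(\Gamma,\X^{\ast})$, work on the countable support of $\phi=T^{\ast}y^{\ast}$, and obtain convergence of the one series that matters, $\sum_{\gamma}T_{\gamma}x_{\gamma}$, from the Bessaga--Pelczynski theorem that weakly unconditionally Cauchy series converge when $c_{0}\not\subset\Y$; this form follows from the criterion the paper recalls by a routine blocking-and-normalizing argument, so you are using the same classical input, just applied once and directly. The paper's route buys a stronger structural fact (the operator itself is norm-approximated by its finite-coordinate sections) and reuses the machinery of Theorem \ref{thm1}; your route buys economy and, incidentally, robustness: you never need the paper's intermediate claim that $\sum_{\gamma}\mathrm{osc}_{T}(\{\gamma\})$ is finite. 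That claim is in fact false in general: for $T\colon c_{0}\rightarrow\ell^{2}$ defined by $Te_{n}=e_{n}/n$ one has $\sum_{n}\mathrm{osc}_{T}(\{n\})=\sum_{n}1/n=\infty$, although $T$ is bounded and $\ell^{2}$ contains no copy of $c_{0}$ (the Lemma controls $\mathrm{osc}_{T}(\Gamma_{n})$ for disjoint sets $\Gamma_{n}$, which here tends to zero even while the sums of singleton oscillations stay large). What is true, and all the paper's proof actually uses, is the weaker tail statement $\mathrm{osc}_{T}(\Gamma\setminus\{\gamma_{n}\}_{n\leq k})\rightarrow 0$, which requires a separate approximation-and-blocking argument from Lemma \ref{lemma}; your proof sidesteps this issue entirely.
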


The above result holds similarly for $\ell^{\infty}(\Gamma,\X)$ in place of $c_{0}(\Gamma,\X)$, essentially with the same proof.
Note that the operators $\I\colon c_{0}(\N,\X)\rightarrow c_{0}(\N,\X)$ and 
$T\colon c_{0}(\N,\Y)\rightarrow \Y;\ (y_{n})\mapsto (y_{1},0,0,\ldots)$ are not weakly compact
for any non-trivial $\X$ and non-reflexive $\Y$ according to the James characterization of reflexivity. 
Hence neither of the assumptions about the reflexivity or the non-containment of $c_{0}$ can be removed.

\begin{proof}[Proof of Theorem \ref{thm2}]
Let $T\colon c_{0}(\Gamma,\X)\rightarrow \Y$ be a continuous linear operator such that $\overline{T(\Y)}$ does not contain $c_{0}$.
One may write $c_{0}(\Gamma,\X)=C_{0}(\Gamma,\X)$ isometrically where $\Gamma$ on the right hand side is interpreted as a discrete 
topological space. 

We claim that the sum $\sum_{\gamma\in \Gamma}\mathrm{osc}_{T}(\{\gamma\})$ is defined and finite. Indeed, otherwise 
one can extract pair-wise disjoint (open) subsets $\Gamma_{n}\subset \Gamma,\ n\in\N,$ such that 
$\sum_{\gamma\in \Gamma_{n}}\mathrm{osc}_{T}(\{\gamma\})\geq 1$ for $n\in\N$. However, since $c_{0}\not\subset \overline{T(\Y)}$,
Lemma \ref{lemma} yields that this case does not occur.
Thus there exists a sequence $(\gamma_{n})_{n\in\N}\subset \Gamma$ such that 
$\mathrm{osc}_{T}(\Gamma\setminus \{\gamma_{n}\}_{n\leq k})\rightarrow 0$ as $k\rightarrow \infty$.

In order to verify the statement of the theorem we must show that $\overline{T(\B_{c_{0}(\Gamma,\X)})}$ is weakly compact. 
In doing this we will apply the James characterization of weakly compact sets.
Fix $f\in \Y^{\ast}$. It suffices to show that $f$ attains its supremum over $\overline{T(\B_{c_{0}(\Gamma,\X)})}$.
Observe that $f\circ T$ defines an element of $C_{0}(\Gamma,\X)^{\ast}$.
For each $k\in \N$ define a contractive linear projection 
$P_{k}\colon c_{0}(\Gamma,\X)\rightarrow c_{0}(\Gamma,\X)$ by $P_{k}f(\cdot)=\chi_{\{\gamma_{n}\}_{n\leq k}}(\cdot)f(\cdot)$.
Put $g_{k}=f\circ T\circ P_{k}$ and $\Z_{k}=P_{k}(c_{0}(\Gamma,\X))$ for $k\in\N$. 
Note that $g_{k}$ restricted to $\Z_{k}$ satisfies ${g_{k}|}_{\Z_{k}}\in \Z_{k}^{\ast}$, 
where $\Z_{k}^{\ast}=\ell^{1}(\{\gamma_{n}\}_{n\leq k},\X^{\ast})$ isometrically for $k\in\N$. 
Clearly ${g_{k+l}|}_{\Z_{k}}={g_{k}|}_{\Z_{k}}$ for $k,l\in \N$. Hence there is a sequence 
$(x_{n}^{\ast})_{n\in\N}\subset \X^{\ast}$  such that
\begin{equation}\label{eq: gzk}
g_{k}\left(\sum_{n=1}^{k}\chi_{\{\gamma_{n}\}}(\cdot)y_{n}\right)=\sum_{n=1}^{k}x_{n}^{\ast}(y_{n})
\end{equation}
for $\sum_{n=1}^{k}\chi_{\{\gamma_{n}\}}y_{n}\in \Z_{k},\ k\in \N$.

Observe that since $\X$ is reflexive, there exists according to the James characterization of reflexivity a sequence 
$(x_{n})_{n\in\N}\subset \S_{\X}$ such that $x_{n}^{\ast}(x_{n})=||x_{n}^{\ast}||$ for $n\in\N$.
It follows that
\begin{equation}\label{eq: together}
\sum_{n=1}^{k}x_{n}^{\ast}(x_{n})=||g_{k}||\quad \mathrm{for}\ k\in\N.
\end{equation}

Now, since $\mathrm{osc}_{T}(\Gamma\setminus \{\gamma_{n}\}_{n\leq k})\rightarrow 0$ as $k\rightarrow \infty$,
we obtain that 
\begin{equation}\label{eq: T}
\lim_{k\rightarrow\infty}||T-T\circ P_{k}||=0\ \mathrm{and}\ \lim_{k\rightarrow\infty}||g_{k}-f\circ T||=0.
\end{equation}
By putting these observations together and using the continuity of $T$ we obtain that the sequence 
$\left(T\left(\sum_{n=1}^{k}\chi_{\{\gamma_{n}\}}(\cdot)x_{n}\right)\right)_{k\in\N}\subset\Y$ is Cauchy.
On the other hand,
\[||f\circ T||\geq f\circ T\left(\sum_{n=1}^{k}\chi_{\{\gamma_{n}\}}(\cdot)x_{n}\right)\geq ||g_{k}||-||g_{k}-f\circ T||\rightarrow ||f\circ T||\quad \mathrm{as}\ k\rightarrow \infty\] 
by using \eqref{eq: gzk}, \eqref{eq: together} and \eqref{eq: T}.
We conclude that 
\[y\stackrel{\cdot}{=}\lim_{k\rightarrow \infty}T\left(\sum_{n=1}^{k}\chi_{\{\gamma_{n}\}}(\cdot)x_{n}\right)\in \overline{T(\B_{c_{0}(\Gamma,\X)})}\] 
satisfies $f(y)=||f\circ T||=\sup_{z\in T(B_{c_{0}(\Gamma,\X)})}f(z)$, which completes the proof.
\end{proof}

\begin{problem}
We do not know if a linear operator $T\colon C_{0}(L,\X)\rightarrow \Y$ is weakly compact if $c_{0}\not\subset \Y$
and $L$ is $0$-dimensional.
\end{problem}

\end{document}